\newtheorem{thm}{Theorem}[section]
\newtheorem{lem}[thm]{Lemma}
\theoremstyle{plain} 
\newcommand{\thistheoremname}{}
\newtheorem*{genericthm}{\thistheoremname}
\theoremstyle{definition}
\theoremstyle{remark}
\newtheorem*{xrem}{Remark}
\numberwithin{equation}{section}
\newcommand{\Z}{\mathbb{Z}}      
\newcommand{\R}{\mathbb{R}}      
\renewcommand{\pmod}[1]{         
  ~(\mathrm{mod}~#1)}
\newcommand{\E}{\mathbb{E}}   
\newcommand{\A}{\mathscr{A}}
\setlist[itemize]{leftmargin=*}
\setlist[enumerate]{leftmargin=*}
\begin{document}

\title{A note on the Cram\'er--Granville model}%
\author{Christian T\'afula}%
\address{D\'epartement de math\'ematiques et de statistique\\
Universit\'e de Montr\'eal\\
CP 6128 succ. Centre-Ville\\
Montr\'eal, QC H3C 3J7\\
Canada}%
\email{christian.tafula.santos@umontreal.ca}%

\subjclass[2020]{05D40, 11N32}%
\keywords{Bateman--Horn, Goldbach's conjecture, probabilistic method}%

 \begin{abstract}
  We show the existence of a set $A\subseteq \mathbb{Z}_{\geq 2}$ satisfying the estimates of the Bateman--Horn conjecture, Goldbach's conjecture, and also
  \[ \#\{p\leq x \text{ prime} ~|~ p\in A\} \gg x(\log\log x)/(\log x)^2. \]
 \end{abstract}
\maketitle

\section{Introduction}
 The Bateman--Horn conjecture, a quantitative form of Schinzel's hypothesis H (cf. Schinzel--Sierpinski \cite{schsie58}), may be stated as follows: Say a family $f_1$, $\ldots$, $f_{k} \in \Z[x]$ of distinct irreducible polynomials with positive leading coefficient is \emph{admissible} if $f := f_1\cdots f_k$ has no fixed divisor $>1$. That is, writing
 \[ \omega_f(p) := \#\{ 0\leq \ell < p ~|~  f(\ell) \equiv 0\pmod{p}\}, \]
 we suppose that $\omega_{f}(p) < p$ for every prime $p$. Then, we expect that
 \begin{equation}
  \#\{n\leq x ~|~ f_1(n),\ldots,f_{k}(n) \text{ are prime} \} = \bigg(\frac{C_{f}}{\deg f} + O_f\bigg(\frac{1}{\log x}\bigg)\bigg) \frac{x}{(\log x)^{k}}, \label{BHconj}
 \end{equation}
 where
 \begin{equation}
  C_{f} := \prod_{p} \bigg(1-\frac{1}{p}\bigg)^{-k}\bigg(1-\frac{\omega_f(p)}{p}\bigg). \label{const}
 \end{equation}
 Bateman--Horn \cite{bathor62} proved the convergence of \eqref{const},\footnote{In the sense of truncating the product for $p\leq T$ and then taking $T\to \infty$.} and calculated the particular case $f_1(n) = n$, $f_2(n) = n^2+n+1$ for $n$ up to 113000. 
 The guess at the main in \eqref{BHconj} term comes from the works of Hardy--Littlewood.\footnote{\eqref{BHconj} generalizes Conjectures B, D, E, F, K, P and Theorem XI of \cite{harlit23}.} The structure of the product follows from an application of Selberg's upper bound sieve, and appears in Bateman--Stemmler \cite{batste62}. Lastly, the error term of $O((\log x)^{-1})$ is suggested by the application of the Cram\'er--Granville model we use below (when taking $T=x^{1/2}$ and ignoring certain error terms). 
 
 In comparison with Conjectures C, G, L of Hardy--Littlewood \cite{harlit23}, Schinzel \cite{sch63} conjectured a modified version of \eqref{BHconj} called Hypothesis H\textsubscript{N} by Halberstam--Richert \cite{halberstam11}. The main subcase is Hardy--Littlewood's extended Goldbach's conjecture, which predicts that, for even $N$,
 \begin{equation}
  \#\{p\leq N ~|~ N-p \text{ is prime} \} = 2C_2\prod_{p\mid N} \bigg(\frac{p-1}{p-2}\bigg) \frac{N}{(\log N)^2}\bigg(1 + O\bigg(\frac{\log\log N}{\log N}\bigg) \bigg), \label{golD}
 \end{equation}
 where $C_2 := \prod_{p\geq 3} (1-(p-1)^{-2}) = 0.66016\ldots$ is the twin primes constant. We chose the error term in \eqref{golD} in comparison with Vinogradov's theorem on sums of three primes (cf. Nathanson \cite[Theorem 8.1]{nathanson96}).
 
 In this note, we use the Cram\'er--Granville model combined with the probabilistic method to obtain the following:
 
 \begin{thm}\label{MT1}
  There exists a set $A\subseteq \Z_{\geq 2}$ satisfying the following:
  
  \begin{enumerate}[label=\textnormal{(\roman*)}]
   \item\label{it1} For every admissible family of polynomials $f_1,\ldots,f_{k} \in \Z[x]$, we have
   \[ \#\{n\leq x ~|~ f_1(n),\ldots,f_{k}(n) \in A\} = \bigg(\frac{C_{f}}{\deg f} + O_f\bigg(\frac{1}{\log\log x}\bigg)\bigg) \frac{x}{(\log x)^{k}}; \]
   
   \item\label{it2} For even $N$,
   \[ \#\{ n\leq N ~|~ n,\, N-n \in A \} =  2C_2\prod_{p\mid N} \bigg(\frac{p-1}{p-2}\bigg)\frac{N}{(\log N)^2}\bigg(1 + O\bigg(\frac{\log\log\log N}{\log\log N}\bigg) \bigg); \]
   
   \item We have
   \[ \#\{p\leq x \textnormal{ prime} ~|~ p\in A\} \gtrsim e^{\gamma}\frac{x}{(\log x)^2}\log\log x, \]
   where $\gamma = 0.57721\ldots$ is the Euler--Mascheroni constant.
  \end{enumerate}
 \end{thm}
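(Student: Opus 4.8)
The plan is to realise $A$ as a random set via the Cram\'er--Granville model with a \emph{very small} sifting level, and then run the probabilistic method: show that (i), (ii), (iii) each hold almost surely, so that the three full-probability events intersect and any sample set $A$ in the intersection proves the theorem. Concretely, I would set $z(m):=\log m$, declare $n\notin A$ deterministically whenever $n$ has a prime factor $\le z(n)$, and otherwise put $n\in A$ independently with probability
\[ q_n\ :=\ \min\Bigl(1,\ \tfrac{1}{\log n}\,\textstyle\prod_{p\le z(n)}(1-1/p)^{-1}\Bigr). \]
By Mertens' theorem $\prod_{p\le z}(1-1/p)^{-1}=e^{\gamma}(\log z)(1+O(1/\log z))$, so for all large $n$ the truncation is inactive and $q_n=e^{\gamma}\,\frac{\log\log n}{\log n}\bigl(1+O(1/\log\log n)\bigr)$. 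The Granville factor $\prod_{p\le z(n)}(1-1/p)^{-1}$ is exactly the correction restoring density $\asymp 1/\log n$ after deleting the small prime factors below $z(n)$; but a prime $p>z(p)=\log p$ survives the sieve for free and is retained with the inflated probability $\sim e^{\gamma}(\log\log p)/\log p$ --- and this surplus is precisely what produces the extra $\log\log x$ in (iii).

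For (iii): every sufficiently large prime exceeds $z(p)$, hence is not removed by the deterministic sieve, so
\[ \E\,\#\{p\le x:\ p\in A\}\ =\ \sum_{p\le x}q_p\ =\ e^{\gamma}\sum_{p\le x}\frac{\log\log p}{\log p}\,(1+o(1)). \]
Since $\log\log p=(1+o(1))\log\log x$ for $p\in[\sqrt x,x]$ and $\sum_{p\le x}(\log p)^{-1}\sim x/(\log x)^2$ by partial summation from the prime number theorem, this mean is $\sim e^{\gamma}x(\log\log x)/(\log x)^2$. As $\#\{p\le x:p\in A\}$ is a sum of independent indicators and is nondecreasing in $x$, a Chebyshev (or Chernoff) tail bound evaluated along a sequence $x_j$ with $x_{j+1}/x_j\to1$ and $\sum_j 1/\E\#\{p\le x_j:p\in A\}<\infty$ (e.g.\ $x_j=j^2$), together with the Borel--Cantelli lemma and monotonicity, gives almost surely $\#\{p\le x:p\in A\}\sim e^{\gamma}x(\log\log x)/(\log x)^2$, the asserted lower bound.

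For (i) and (ii) the skeleton is the same, except the deterministic roughness condition must be evaluated by a sieve. Fixing an admissible $f=f_1\cdots f_k$ (so that $f_1(n),\dots,f_k(n)$ are distinct for all but $O_f(1)$ values of $n$), I would write
\[ \E\,\#\{n\le x:\ f_1(n),\dots,f_k(n)\in A\}\ =\ \sum_{n\le x}\Bigl(\prod_{j}\mathbf 1\bigl[\,f_j(n)\text{ has no prime factor }\le z(f_j(n))\,\bigr]\Bigr)\prod_{j=1}^{k}q_{f_j(n)}\ +\ O_f(1), \]
and apply the fundamental lemma of sieve theory, legitimate here because the sifting level is only $\ll\log x$, which makes the fundamental-lemma error negligible. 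The indicator then averages to the truncated local densities $\prod_p(1-\omega_f(p)/p)$ arranged over the relevant prime ranges; the $\log z$-factors manufactured by Mertens inside $\prod_j q_{f_j(n)}$ cancel exactly against those produced by the sifting densities, and what survives is the predicted main term $\frac{C_f}{\deg f}\cdot\frac{x}{(\log x)^k}$ up to a factor $1+O_f(1/\log\log x)$ (the surviving error being the Mertens remainder at level $z\asymp\log x$, the remaining contributions, from the fundamental lemma and from the tail of the Bateman--Horn product, being smaller). Part (ii) is the same computation applied to the pair $n,\,N-n$: for even $N$ the local density at level $z$ is $\tfrac12\prod_{3\le p\le z}(1-2/p)\cdot\prod_{3\le p\mid N,\ p\le z}\frac{p-1}{p-2}$, which by Mertens and $\prod_{p\ge3}(1-(p-1)^{-2})=C_2$ combines with $q_n q_{N-n}$ to give $2C_2\prod_{p\mid N}\frac{p-1}{p-2}\cdot\frac{N}{(\log N)^2}$; bookkeeping of the Mertens remainder with $z\asymp\log N$, of the divisors $p\mid N$ exceeding $z$, and of the ranges where $n$ or $N-n$ is small, uniformly in $N$, produces the stated error $O(\log\log\log N/\log\log N)$. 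Concentration is routine in both cases: for (i) the indicators indexed by $n$ are independent except for the $O_f(x)$ pairs $n\ne n'$ with $f_i(n)=f_j(n')$, whose total covariance is $\ll_f x(\log\log x/\log x)^{k+1}=o\bigl((x/(\log x)^k)^2\bigr)$, so Chebyshev, monotonicity in $x$, a subsequence, and Borel--Cantelli finish it; for (ii), $Z_N:=\#\{n\le N:n,N-n\in A\}$ equals, up to the $O(1)$ term $\mathbf 1[N/2\in A]$, twice a sum of genuinely independent indicators, so a Bennett-type bound makes the failure probabilities summable over all even $N$. As (i) need only hold for the countably many admissible $f$, the intersection of all these almost-sure events still has probability $1$, which is nonempty.

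The main obstacle is the analytic core of (i) and (ii): showing that the small-level sieve really does reproduce the Bateman--Horn main term $\frac{C_f}{\deg f}\,x/(\log x)^k$ and the Goldbach singular series $2C_2\prod_{p\mid N}\frac{p-1}{p-2}\cdot N/(\log N)^2$ \emph{exactly}, with the claimed (deliberately weak) error terms and --- crucially for (ii) --- uniformly in $N$; this rests on tracking the precise interplay between the fundamental lemma of the sieve and the Mertens asymptotics. The probabilistic ingredients --- expectations, second moments, and the passage from a sparse subsequence to all scales --- are standard once the relevant independence is in hand.
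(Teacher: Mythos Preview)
Your outline is correct and would prove the theorem, but it differs from the paper's argument in three coupled ways.

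\textbf{Sifting level and main-term computation.} You take $z(n)=\log n$ and extract the main term for (i) and (ii) via the fundamental lemma (in fact, since $\pi(z)\sim\log x/\log\log x$, even Legendre's full inclusion--exclusion has total remainder $\prod_{p\le z}(1+\omega_f(p))=x^{o(1)}$, so no real sieve machinery is required). The paper instead takes the smaller level $T(n)=\log n/\log\log\log n$, which forces $P_T=\prod_{p\le T}p=x^{o(1)}$; this lets the paper avoid sieves altogether by partitioning $[\sqrt x,x]$ into blocks of length $P_T$ and counting exactly within each block via the Chinese remainder theorem. Your larger $z$ would make $P_z\asymp x$ and kill that block argument, which is why the paper cannot use it. Conversely, the paper's smaller $T$ is what produces the triple-log error in (ii): the primes $p\mid N$ with $p>T$ contribute $\exp(O(\frac{1}{T}\cdot\frac{\log N}{\log\log N}))=1+O(\frac{\log\log\log N}{\log\log N})$, whereas with your $z=\log N$ the same step gives the stronger $1+O(1/\log\log N)$. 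So your route actually yields a sharper (ii), though you only claim what the theorem states.

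\textbf{Concentration.} You use second moments (Chebyshev) for (i) and a Bernstein/Bennett bound for (ii). This works: for (i) the off-diagonal covariance is $O_f(x)\cdot(\log\log x/\log x)^{k}=o((\E Y)^2)$, and running Chebyshev along $x_j=j^2$ together with monotonicity recovers the $O_f(1/\log\log x)$ error for all $x$; for (ii) the summands over $n<N/2$ are genuinely independent, so an exponential bound makes the failure probabilities summable over all even $N$. The paper instead invokes the Kim--Vu polynomial concentration inequality uniformly for both (i) and (ii), which gives a single clean $O(\sqrt{x}(\log x)^{O(1)})$ deviation without any subsequence bookkeeping. Kim--Vu is a heavier hammer than strictly necessary here; your more elementary tools suffice because the dependency structure is so mild.

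Both arguments finish the same way: (i), (ii), (iii) each hold with probability $1$, and the countable intersection over all admissible families is still a full-probability event, hence nonempty.
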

 
 It would be interesting to find similar sets with a larger density of primes.

\section{Probabilistic model}
 Let $T = T(x) := \log x/\log\log\log x$, and write $P_T := \prod_{p\leq T} p$. Let $\A\in\Z_{\geq 2}$ be a random subset defined by:
 \begin{equation}
  \Pr(n\in \A) = \E(\mathbbm{1}_{\A}(n)) :=
  \begin{cases}
   \displaystyle \prod_{p\leq T(n)} \bigg(1-\frac{1}{p}\bigg)^{-1} \frac{1}{\log n}, &\text{if } (n, P_{T(n)}) = 1 \\
   0,&\text{otherwise.}
  \end{cases} \label{defpr}
 \end{equation}
 The product measure of the Bernoulli random variables $\mathbbm{1}_{\A}(n)$, $n\geq 2$, may be interpreted as a measure on the set of subsets $\A\subseteq \Z_{\geq 2}$. Since the intersection of countably many events of probability $1$ has probability $1$ (and hence is non-empty), for any countable collection of properties proven to hold with probability $1$ with respect to \eqref{defpr}, there exists a set $A\subseteq \Z_{\geq 2}$ satisfying all those properties. 
 
 \begin{xrem}
  The idea for this model as a tool to predict properties of primes dates back to Cram\'er \cite{cra36}, who used the simpler model ``$\Pr(n\in \A) = 1/\log n$'' (or \eqref{defpr} with $T=1$) to conjecture bounds on the size of gaps between consecutive primes. This was later refined by Granville \cite{gra95, gran94} to the form above. With our choice of $T$, as a fixed slow function of $n$, \eqref{defpr} yields a fixed measure on the subsets of $\Z_{\geq 2}$, allowing us to apply the probabilistic method.
 \end{xrem}
 
 \subsection{The Kim--Vu inequality}
  Let $n\in \Z_{\geq 1}$, and take $v_1$, $\ldots$, $v_{n}$ to be independent, not necessarily identically distributed, $\{0,1\}$-random variables. A \emph{boolean polynomial} is a multivariate polynomial
 \[ Y(v_1,\ldots,v_n) = \sum_{i} c_i I_i \in \R[v_1,\ldots,v_{n}], \]
 where the $I_i$s are monomials: products of some of the $v_k$s. We say that $f$ is \emph{positive} if $c_i \in \R_{>0}$ for every $i$, and \emph{simple} if the largest exponent of $v_i$ in a monomial is $1$ for every $i$. For a non-empty multiset\footnote{A \emph{multiset} is a set that allows multiple instances of an element.} $S\subseteq \{v_1,\ldots,v_n\}$, define $\partial_S :=  \prod_{v\in S} \partial_{v}$, where $\partial_{v}$ is the partial derivative in $v$. For example: if $S = \{1,1,2\}$, then $\partial_{S} (v_1^3 v_2 v_3 + 3v_1^5) = 6v_1 v_3$. Define
 \[ \E'(Y) = \max_{\substack{S\subseteq \{v_1,\ldots,v_n\} \\ \text{multiset},\, |S| \geq 1}} \E(\partial_S Y). \]
 We will use the following concentration result:

 \begin{lem}[Kim--Vu \cite{kimvvu00}]\label{kimvu}
  Let $k\geq 1$, and $Y(v_1,\ldots,v_n)$ is a positive, simple boolean polynomial of degree $k$. Write $E' := \E'(Y)$ and $E := \max\{\E(Y), E'\}$. Then, for any real $\lambda \geq 1$, we have
  \[ \Pr\big(|Y-\E(Y)| > 8^{k}\sqrt{k!}\, \lambda^k (E'E)^{1/2} \big) \ll_{k} n^{k-1}e^{-\lambda} \]
 \end{lem}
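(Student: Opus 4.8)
The plan is to prove Lemma~\ref{kimvu} by induction on the degree $k$. The base case $k=1$ is classical: if $Y=\sum_i c_iv_i$ with $c_i=\partial_{v_i}Y\ge 0$, then $\E'(Y)=\max_i c_i$, the variance equals $\sum_i c_i^2 p_i(1-p_i)\le(\max_i c_i)\sum_i c_ip_i=\E'(Y)\,\E(Y)\le E'E$ (writing $p_i:=\E v_i$), and Bernstein's inequality at scale $t=8\lambda\sqrt{E'E}$ makes both the sub-Gaussian exponent $\asymp t^2/\operatorname{Var}(Y)\gtrsim\lambda^2$ and the Poissonian exponent $\asymp t/\E'(Y)\gtrsim\lambda\sqrt{E/E'}$ at least $\lambda$; so the probability is $\ll e^{-\lambda}$, which is the claim since here $n^{k-1}=1$.

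For $k\ge 2$ I would expose the coordinates one at a time and work with the Doob martingale $M_i:=\E[Y\mid v_1,\dots,v_i]$. Because $Y$ is simple, hence affine in each $v_i$, the increments are exactly $M_i-M_{i-1}=(v_i-p_i)\,h_i$ with $h_i:=\E[\partial_{v_i}Y\mid v_1,\dots,v_{i-1}]\ge 0$, and $h_i$ is again a positive simple boolean polynomial, now of degree $\le k-1$, with $\E(h_i)\le\E'(Y)$ and $\E'(h_i)\le\E'(Y)$. So the inductive hypothesis gives $h_i\le E'':=2\cdot 8^{k-1}\sqrt{(k-1)!}\,\lambda^{k-1}\E'(Y)$ outside an event of probability $\ll n^{k-2}e^{-\lambda}$, and a union bound over $i\le n$ yields a good event on which every martingale increment is bounded by $E''$ in absolute value.

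To apply Freedman's inequality I also need the predictable quadratic variation $V=\sum_i p_i(1-p_i)h_i^2\le E''\sum_i p_ih_i$ under control. The key point is that $N:=\sum_i p_ih_i=\sum_i\E[v_i\,\partial_{v_i}Y\mid v_1,\dots,v_{i-1}]$ is \emph{itself} a positive simple boolean polynomial of degree $\le k-1$, to which the inductive hypothesis applies once more; and, crucially, the Euler-type bound $\sum_i v_i\,\partial_{v_i}Y\le kY$ (valid since $Y$ is simple) gives $\E(N)\le k\,\E(Y)\le kE$ and $\E'(N)\le k\,\E'(Y)$, so $N\le L:=2k\cdot 8^{k-1}\sqrt{(k-1)!}\,\lambda^{k-1}E$ off a further event of probability $\ll n^{k-2}e^{-\lambda}$. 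Intersecting everything, I get a good event $\mathcal G$ with $\Pr(\mathcal G^c)\ll_k n^{k-1}e^{-\lambda}$ on which the increments are $\le E''$ and $V\le E''L$ simultaneously; running the martingale stopped just before the first time $h_i$ would exceed $E''$ (a stopping time, since $h_i$ is $\mathcal F_{i-1}$-measurable) makes these bounds hold almost surely while leaving $M_n=Y$ intact on $\mathcal G$. Freedman's inequality at scale $t=8^k\sqrt{k!}\,\lambda^k\sqrt{E'E}$ then gives $\Pr(|Y-\E Y|\ge t,\ \mathcal G)\ll e^{-\lambda}$, because the sub-Gaussian exponent is $\asymp t^2/(E''L)$ and the Poissonian exponent is $\asymp t/E''$, both $\gtrsim\lambda$ — which is what dictates the shape $8^k\sqrt{k!}$ of the constant (a factor $8\sqrt k$ per inductive level). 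Adding $\Pr(\mathcal G^c)$ closes the induction.

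The step I expect to be the crux is the control of the quadratic variation: it is tempting to bound $\sum_i p_ih_i$ only through its expectation $\le kE$ and Markov's inequality, which is far too weak; the point that makes the proof go through is recognizing $\sum_i p_ih_i$ as a degree-$(k-1)$ positive simple polynomial (so the inductive hypothesis applies), whose mean is tamed via $\sum_i v_i\,\partial_{v_i}Y\le kY$. The rest is bookkeeping: propagating the constants $8^k\sqrt{k!}$ and the loss $n^{k-1}$ cleanly through the induction, checking that $h_i$ and $N$ inherit the needed bounds on $\E(\cdot)$ and $\E'(\cdot)$ from those of $Y$, and the fussy but routine verification that the stopped martingale meets the almost-sure hypotheses of Freedman's inequality.
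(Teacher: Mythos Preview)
The paper does not prove this lemma at all: it is quoted verbatim from Kim--Vu \cite{kimvvu00} and used as a black box, so there is no ``paper's own proof'' to compare against. Your sketch is a correct outline of the original Kim--Vu argument---induction on the degree via the Doob martingale $M_i=\E[Y\mid v_1,\dots,v_i]$, with the increments $(v_i-p_i)h_i$ controlled by applying the inductive hypothesis to each $h_i=\E[\partial_{v_i}Y\mid\mathcal F_{i-1}]$, and the predictable quadratic variation controlled by recognising $\sum_i p_ih_i$ as a positive simple polynomial of degree $\le k-1$ whose expectation and $\E'$ are bounded via the multilinear Euler inequality $\sum_i v_i\,\partial_{v_i}Y\le kY$, then closing with Freedman. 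The points you flag as routine (predictable stopping so that Freedman's almost-sure hypotheses hold, and propagating the constants $8^k\sqrt{k!}$ and the loss $n^{k-1}$ through the induction) are indeed routine; the only place to be slightly careful is that the stopping time must also cut off the running sum $\sum_{j\le i}p_jh_j$ before it exceeds $L$, but since this sum is $\mathcal F_{i-1}$-measurable and increasing, the event that it ever exceeds $L$ is contained in $\{N>L\}$, which you have already placed in $\mathcal G^c$.
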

 
 The quantity $E'$ can be thought of as measuring how correlated are the monomials of $Y$. In applications, we will take $\lambda = (k+1)\log n$ in order to use the Borel--Cantelli lemma, which yields strong concentration when $1\ll E' = o(E/(\log n)^{2k})$. 
 
 \subsection{The Bateman--Horn conjecture}
 Let $f_1$, $\ldots$, $f_{k} \in \Z[x]$ be distinct irreducible polynomials as in the introduction, and $f := f_1\cdots f_k$. For large $n$, we have (say) $f_1(n) < \cdots < f_{k}(n)$ (if not, reorder the $f_i$); in particular, they are all independent. For $U_{+} := \lceil T(f_k(x)) \rceil$, we have
 \begin{align}
  \E&(\#\{n\leq x ~|~ f_1(n),\ldots,f_k(n)\in \A\}) \nonumber \\
  &= \sum_{\sqrt{x}\leq n\leq x} \Pr(f_1(n)\in \A)\cdots \Pr(f_k(n)\in \A) + O(\sqrt{x}) \nonumber \\
  &= \sum_{r=0}^{(x-\sqrt{x})/P_{U_{+}}} \sum_{n=\sqrt{x}+rP_{U_{+}}}^{\sqrt{x}+ (r+1)P_{U_{+}}} \Pr(f_1(n)\in \A)\cdots \Pr(f_k(n)\in \A) + O(P_{U_{+}} + \sqrt{x}) \nonumber \\
  &= \frac{S(x)}{\deg f} \sum_{r=0}^{(x-\sqrt{x})/P_{U_{+}}} \frac{P_{U_{+}}+O(1)}{\log(\sqrt{x}+ rP_{U_{+}})^{k}} + O(4^{U_{+}} + \sqrt{x}), \label{eqsplt}
 \end{align}
 where, by the Chinese remainder theorem, for $U_{-} := \lfloor T(f_1(\sqrt{x}))\rfloor$,
 \begin{equation}
  \prod_{p\leq U_{-}}\bigg(1-\frac{1}{p}\bigg)^{-k} \prod_{p\leq U_{+}} \bigg(1 - \frac{\omega_f(p)}{p}\bigg) \leq S(x) \leq \prod_{p\leq U_{+}}\bigg(1-\frac{1}{p}\bigg)^{-k} \prod_{p\leq U_{-}} \bigg(1 - \frac{\omega_f(p)}{p}\bigg). \label{bddS}
 \end{equation}
 In order to estimate $S(x)$, we need the following lemma:
 
 \begin{lem}\label{mert}
  We have:\smallskip
  \begin{enumerate}[label=\textnormal{(\roman*)}]
   \item $\displaystyle \prod_{p\leq T} \bigg(1-\frac{1}{p}\bigg)^{-k} = e^{k\gamma} (\log T)^k + O((\log T)^{k-1})$;\smallskip
   
   \item $\displaystyle \prod_{p\leq T} \bigg(1-\frac{\omega_f(p)}{p}\bigg) = C_f\,e^{-k\gamma} (\log T)^{-k} + O((\log T)^{-(k+1)})$.
  \end{enumerate}
 \end{lem}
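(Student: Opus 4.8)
The plan is to derive both parts from the classical Mertens theorems together with the convergence of the Bateman--Horn product \eqref{const}. For part (i), recall Mertens' third theorem in the form $\prod_{p\leq T}(1-1/p)^{-1} = e^{\gamma}\log T + O(1)$, with the $O(1)$ error coming from the prime number theorem (or even just from Mertens' elementary estimate $\sum_{p\leq T} 1/p = \log\log T + M + O(1/\log T)$). Raising this to the $k$-th power, I would expand $\big(e^{\gamma}\log T + O(1)\big)^k$ by the binomial theorem: the leading term is $e^{k\gamma}(\log T)^k$, and every other term is $O((\log T)^{k-1})$ since $k$ is fixed. This gives (i) immediately.

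For part (ii), write $\prod_{p\leq T}(1-\omega_f(p)/p) = \prod_{p\leq T}(1-1/p)^k \cdot \prod_{p\leq T}(1-1/p)^{-k}(1-\omega_f(p)/p)$. The first factor is $e^{-k\gamma}(\log T)^{-k} + O((\log T)^{-(k+1)})$ by part (i) (inverting the asymptotic expansion, valid since the main term is bounded away from $0$). For the second factor, the key point is that the local factors $(1-1/p)^{-k}(1-\omega_f(p)/p)$ satisfy $(1-1/p)^{-k}(1-\omega_f(p)/p) = 1 + O(1/p^2)$: indeed $\omega_f(p) = k$ for all but finitely many $p$ (those dividing the discriminant-type quantity attached to $f$), and expanding both factors in $1/p$ the $1/p$ terms cancel, leaving an $O_f(1/p^2)$ remainder (for the finitely many exceptional $p$ one simply absorbs them into an $O_f$ constant, using admissibility $\omega_f(p)<p$ to ensure no factor vanishes). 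Hence the partial products $\prod_{p\leq T}(1-1/p)^{-k}(1-\omega_f(p)/p)$ converge to $C_f$ as $T\to\infty$, with a tail estimate $\prod_{p>T}\big(1 + O_f(1/p^2)\big) = 1 + O_f(1/T) = 1 + O_f(1/\log T)$ coming from $\sum_{p>T} 1/p^2 \ll 1/(T\log T) \ll 1/T$. Therefore the second factor equals $C_f(1 + O_f(1/\log T))$; multiplying by the first factor and distributing the error yields $C_f e^{-k\gamma}(\log T)^{-k} + O_f((\log T)^{-(k+1)})$, which is (ii).

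The only mild subtlety — and the place I would be most careful — is the bookkeeping of the finitely many primes $p$ with $\omega_f(p)\neq k$ (equivalently the primes dividing the resultant-like obstruction for $f=f_1\cdots f_k$): these contribute a fixed nonzero constant to the product, which must be folded into $C_f$ and into the implied $O_f$-constant rather than estimated termwise, and one uses admissibility ($\omega_f(p)<p$, so $1-\omega_f(p)/p>0$) to know this constant is finite and nonzero. Everything else is a routine manipulation of absolutely convergent products and the standard Mertens estimates, and the uniformity in $T$ is automatic since all error terms are powers of $1/\log T$ with constants depending only on $f$ (hence on $k$).
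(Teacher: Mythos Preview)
Your proof of part (i) is fine and matches the paper. The gap is in part (ii): your claim that $\omega_f(p)=k$ for all but finitely many primes is false whenever some $f_i$ is nonlinear. For a single irreducible $f_1$ of degree $d>1$, the count $\omega_{f_1}(p)$ fluctuates with the splitting type of $p$ in the number field $\Q(\alpha)$ generated by a root of $f_1$; e.g.\ for $f_1(x)=x^2+1$ one has $\omega_{f_1}(p)=2$ for $p\equiv 1\pmod 4$ and $\omega_{f_1}(p)=0$ for $p\equiv 3\pmod 4$, so $\omega_{f_1}(p)$ is essentially never equal to $1=k$. Consequently the local factor $(1-1/p)^{-k}(1-\omega_f(p)/p)$ equals $1+(k-\omega_f(p))/p+O_f(1/p^2)$ with a genuinely nonzero $1/p$ term, the product defining $C_f$ is only conditionally convergent, and your tail bound $\prod_{p>T}(1+O_f(1/p^2))=1+O_f(1/T)$ does not apply.

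What is actually needed---and what the paper invokes---is the prime ideal theorem in each field $\Q(\alpha_i)$, which gives $\sum_{p\le T}\omega_i(p)/p=\log\log T+A_i+O(1/\log T)$ and hence $\sum_{p\le T}(\omega_f(p)-k)/p=D_f+O(1/\log T)$ for some constant $D_f$. This is precisely the analytic input that controls the oscillation of $\omega_f(p)$ around its average value $k$ and yields the required $O(1/\log T)$ rate of convergence to $C_f$; without it, one cannot even conclude that $C_f$ exists, let alone obtain the stated error term. (Your argument is correct in the special case where every $f_i$ is linear, i.e.\ the Hardy--Littlewood $k$-tuple situation, since then each $\omega_i(p)=1$ identically; perhaps that case is what you had in mind.)
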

 \begin{proof}
  Part (i) follows from Mertens' third theorem. For part (ii), we sketch the argument following Bateman--Horn \cite{bathor62}. Write $\omega_i(p):= \{0\leq \ell < p ~|~ f_i(\ell)\equiv 0\pmod{p}\}$. Since $\gcd(f_i,f_j) \in \Z$ for every pair $i,j\leq k$, the roots of the $f_i$ will all be distinct modulo $p$ for large $p$, so that $\omega_f(p) = \omega_1(p) + \cdots + \omega_k(p)$.
  
  For all but finitely many $p$, $\omega_i(p)$ is the number of distinct prime ideals of norm $p$ in the number field generated by a root of $f_i$. Thus, by the prime ideal theorem,\footnote{We use that $\#\{\mathfrak{p}\subseteq \mathcal{O}_K ~|~ \mathrm{N}\mathfrak{p} \leq x\} = (1+ O_K(\frac{1}{\log x})) \frac{x}{\log x}$ --- cf. Satz 191 of Landau \cite{landau18}.} one obtains by partial summation that
  \begin{align*}
   \sum_{p\leq T} \frac{\omega_i(p)}{p} &= \log\log T + A_i + O\bigg(\frac{1}{\log T}\bigg) \\
   &= \sum_{p\leq T} \frac{1}{p} + B_i + O\bigg(\frac{1}{\log T}\bigg),
  \end{align*}
  for some constants $A_i$, $B_i$. Hence, there is some constant $D_f$ for which
  \begin{equation}
   \sum_{p\leq T} \frac{\omega_f(p)-k}{p} = D_f + O\bigg(\frac{1}{\log T}\bigg) \label{Dfcns}
  \end{equation}
  
  Using that 
  \[ 1-\frac{\omega_f(p)}{p} = \bigg(1-\frac{1}{p}\bigg)^{k}\bigg(1 - \frac{\omega_f(p)-k}{p} + O\bigg(\frac{1}{p^2}\bigg) \bigg), \] 
  it follows from Mertens' third theorem and \eqref{Dfcns} that
  \begin{align*}
   \prod_{p\leq T} \bigg(1-\frac{\omega_f(p)}{p}\bigg) &= \prod_{p\leq T}\bigg(1-\frac{1}{p}\bigg)^{k} \prod_{p\leq T} \bigg(1 - \frac{\omega_f(p)-k}{p} + O\bigg(\frac{1}{p^2}\bigg) \bigg) \\
   &= \frac{\exp(D_f + G_f + O((\log T)^{-1}))}{e^{k\gamma} (\log T)^{k} + O((\log T)^{k-1})} \\
   &= \bigg(1+ O\bigg(\frac{1}{\log T}\bigg)\bigg) C_f e^{-k\gamma} (\log T)^{-k}.
  \end{align*}
  where $G_f$ is the constant coming from $O(1/p^2)$, and $C_f = e^{D_f+G_f}$.  
 \end{proof}

 Since $\log T(x^{t}) = \log T(x) + O(1)$ uniformly for $\frac{1}{2} \leq t \leq 2\deg f$, it follows from \eqref{eqsplt}, \eqref{bddS} and Lemma \ref{mert} that
 \begin{align}
  \E(\#\{n\leq x ~|~ &f_1(n),\ldots,f_k(n)\in \A\}) \nonumber \\
  &= \bigg(\frac{C_f}{\deg f}+ O\bigg(\frac{1}{\log T}\bigg)\bigg) \int_{\sqrt{x}}^{x} \frac{\mathrm{d}t}{(\log t)^k} + O(\sqrt{x}) \nonumber \\ 
  &=\bigg(\frac{C_f}{\deg f} + O\bigg(\frac{1}{\log\log x}\bigg)\bigg) \frac{x}{(\log x)^k}. \label{expec}
 \end{align}
 In the notation of Kim-Vu's inequality (Lemma \ref{kimvu}),
 \[ Y(x) := \#\{\sqrt{x} \leq n\leq x ~|~ f_1(n),\ldots,f_k(n)\in \A\} = \sum_{\sqrt{x}\leq n\leq x} \mathbbm{1}_{\A}(f_1(n))\cdots \mathbbm{1}_{\A}(f_k(n)) \]
 is a positive, simple boolean polynomial of degree $k$ in $\asymp x$ variables, with $E'\leq k$. Indeed, each $\mathbbm{1}_{\A}(m)$ appears in at most $k$ distinct monomials for large $x$, since the $f_i$ are increasing on $[\sqrt{x}, x]$. Taking $\lambda = (k+1)\log x$, we have 
 \begin{align*}
  \Pr(|Y(x) - \E(Y(x))| \geq 8^k\sqrt{k!}\,\lambda^k (k\, \E(Y(x)))^{1/2}) \ll x^{-2}.
 \end{align*}
 It follows from \eqref{expec} that $8^k\sqrt{k!}\,\lambda^k (k\, \E(Y(x)))^{1/2} = O(\sqrt{x}\,(\log x)^{k/2})$. Thus, applying the Borel--Cantelli lemma (for integer $x$), we conclude that
 \begin{align*}
  \#\{n\leq x ~|~ f_1(n),\ldots,f_k(n)\in \A\} &\stackrel{\textnormal{a.s.}}{=} \E(\#\{n\leq x ~|~ f_1(n),\ldots,f_k(n)\in \A\}) \\
  &\hspace{+12em}+ O(\sqrt{x}\,(\log x)^{k/2}).
 \end{align*}
 Therefore, from \eqref{expec}, there exists $A\subseteq \Z_{\geq 2}$ such that
 \begin{equation*}
  \#\{n\leq x ~|~ f_1(n),\ldots,f_k(n)\in A\} = \bigg(\frac{C_f}{\deg f} + O_f\bigg(\frac{1}{\log\log x}\bigg)\bigg) \frac{x}{(\log x)^k}. 
 \end{equation*}
 for every admissible family of polynomials $f_1$, $\ldots$, $f_k$.

 \subsection{Goldbach's conjecture}
 For even $N \in \Z_{\geq 2}$, write
 \[ r_{\A,2}(N) := \sum_{n\leq N} \mathbbm{1}_{\A}(n) \mathbbm{1}_{\A}(N-n), \]
 so that, for $T = T(N)$,
 \begin{align}
  \E(r_{\A,2}(N)&) = \sum_{\sqrt{N} \leq n\leq N-\sqrt{N}} \Pr(n\in \A) \Pr(N-n \in \A) + O(\sqrt{N}) \nonumber \\
  &= \sum_{r=0}^{(N-\sqrt{N})/P_{T}} \sum_{n=\sqrt{N}+rP_T}^{\sqrt{N}+ (r+1)P_T} \Pr(n\in \A) \Pr(N-n \in \A)+ O(P_T + \sqrt{N}) \nonumber \\
  &= S(N) \sum_{r=0}^{(N-\sqrt{N})/P_{T}} \frac{P_T+O(1)}{\log(\sqrt{N}+ rP_T) \log(N - \sqrt{N} - rP_T)} + O(4^T + \sqrt{N}), \label{eqgold}
 \end{align}
 where, by the Chinese remainder theorem, for $U := T(\sqrt{N})$ ($<T$),
 \begin{align*}
  \prod_{p\leq U}\bigg(1-\frac{1}{p}\bigg)^{-2} \prod_{\substack{p\leq T \\ p\nmid N}} \bigg(1 - \frac{2}{p}\bigg) &\prod_{\substack{p\leq T \\ p\mid N}} \bigg(1 - \frac{1}{p}\bigg) \leq S(N) \\
  &\leq \prod_{p\leq T}\bigg(1-\frac{1}{p}\bigg)^{-2} \prod_{\substack{p\leq U \\ p\nmid N}} \bigg(1 - \frac{2}{p}\bigg) \prod_{\substack{p\leq U \\ p\mid N}} \bigg(1 - \frac{1}{p}\bigg).
 \end{align*}
 Using that
 \[ \bigg(1-\frac{2}{p}\bigg) = \bigg(1-\frac{1}{p}\bigg)^{2}\bigg(1-\frac{1}{(p-1)^2}\bigg), \]
 we get
 \begin{equation*}
  S(N) = 2\prod_{3\leq p\leq T} \bigg(1-\frac{1}{(p-1)^2}\bigg) \prod_{\substack{3\leq p \leq T \\ p\mid N}} \bigg(\frac{p-1}{p-2}\bigg) \cdot R(N),
 \end{equation*}
 where
 \[ \prod_{U\leq p\leq T} \bigg(1-\frac{1}{p} \bigg)^2 \leq R(N) \leq \prod_{\substack{U\leq p\leq T \\ p\nmid N}} \bigg(1 - \frac{2}{p}\bigg)^{-1} \prod_{\substack{U \leq p \leq T \\ p\mid N}} \bigg(1 - \frac{1}{p}\bigg)^{-1} \]
 Since $\log\log T - \log\log U = O(1/\log N)$, it follows that $R(N) = 1 + O(1/\log N)$, so
 \begin{equation}
  S(N) = 2\prod_{3\leq p\leq T} \bigg(1-\frac{1}{(p-1)^2}\bigg) \prod_{\substack{3\leq p \leq T \\ p\mid N}} \bigg(\frac{p-1}{p-2}\bigg) \bigg(1+O\bigg(\frac{1}{\log N}\bigg)\bigg). \label{SNsmfn}
 \end{equation}
 With that we can show the following:
 
 \begin{lem}\label{goldSN}
  We have
  \[ S(N) =  2C_2 \prod_{\substack{p\geq 3 \\ p\mid N}} \bigg(\frac{p-1}{p-2}\bigg) \bigg(1+O\bigg(\frac{\log\log\log N}{\log\log N}\bigg)\bigg), \]
  where $C_2 = \prod_{p\geq 3} (1 - \frac{1}{(p-1)^{2}})$.
 \end{lem}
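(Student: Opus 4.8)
The plan is to start from the factorization \eqref{SNsmfn} and to show that, at the cost of an error of size $O(\log\log\log N/\log\log N)$, each of the two truncated products over $p\leq T$ may be replaced by its completed version. Recall from the setup of Section~2 that $T=T(N)=\log N/\log\log\log N$, so that $\log T=\log\log N+O(\log\log\log\log N)\asymp\log\log N$.

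First I would deal with the $N$-independent product. Writing $C_2=\prod_{p\geq 3}(1-(p-1)^{-2})$, one has
\[ \prod_{3\leq p\leq T}\Big(1-\tfrac1{(p-1)^2}\Big)=C_2\prod_{p>T}\Big(1-\tfrac1{(p-1)^2}\Big)^{-1}=C_2\big(1+O(1/T)\big), \]
since $\sum_{p>T}(p-1)^{-2}\leq\sum_{n>T}n^{-2}\ll 1/T$. As $1/T=\log\log\log N/\log N=o(\log\log\log N/\log\log N)$, this is negligible for our purposes.

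The main point is the product over the primes dividing $N$. The factor $\frac{p-1}{p-2}=1+\frac1{p-2}$ is decreasing in $p$, so for every prime $p>T$ it is at most $1+\frac1{T-2}$. Moreover, if $p_1<\cdots<p_j$ are the primes dividing $N$ that exceed $T$, then $T^j\leq p_1\cdots p_j\leq N$, whence $j\leq\log N/\log T$. Combining these two observations,
\[ 1\leq\prod_{\substack{p\mid N\\ p>T}}\frac{p-1}{p-2}\leq\Big(1+\frac1{T-2}\Big)^{\log N/\log T}=\exp\Big(O\Big(\frac{\log N}{T\log T}\Big)\Big)=\exp\Big(O\Big(\frac{\log\log\log N}{\log\log N}\Big)\Big), \]
because $\frac{\log N}{T\log T}=\frac{\log\log\log N}{\log T}\asymp\frac{\log\log\log N}{\log\log N}$. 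Hence
\[ \prod_{\substack{3\leq p\leq T\\ p\mid N}}\frac{p-1}{p-2}=\prod_{\substack{p\geq 3\\ p\mid N}}\frac{p-1}{p-2}\cdot\Big(1+O\Big(\frac{\log\log\log N}{\log\log N}\Big)\Big). \]
Substituting the two displayed estimates into \eqref{SNsmfn} and noting that both $1/\log N$ and $1/T$ are $o(\log\log\log N/\log\log N)$, the asserted formula for $S(N)$ follows.

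The only genuine obstacle is the estimate for the large prime divisors of $N$: completing a rapidly convergent product such as $\prod(1-(p-1)^{-2})$ is harmless, but $N$ itself may carry as many as $\asymp\log N/\log\log N$ prime factors, and it is precisely the worst case — many prime divisors of $N$ only slightly larger than the cutoff $T$ — that forces the error term $O(\log\log\log N/\log\log N)$, and thereby the error term appearing in part~\ref{it2} of Theorem~\ref{MT1}. One could not hope to do better here without either enlarging $T$ (which would hurt the Kim--Vu concentration step) or imposing restrictions on $N$.
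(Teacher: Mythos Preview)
Your proposal is correct and follows essentially the same route as the paper: start from \eqref{SNsmfn}, complete the $C_2$ product at cost $O(1/T)$, and control the tail $\prod_{p\mid N,\,p>T}\frac{p-1}{p-2}$ by bounding both the number of such primes and the size of each factor. The only cosmetic difference is that the paper bounds the number of large prime divisors by the standard estimate $\omega(N)\ll \log N/\log\log N$, whereas you use the more direct $T^{j}\leq N\Rightarrow j\leq \log N/\log T$; since $\log T\asymp\log\log N$ these yield the same final error $\exp\big(O(\tfrac{\log N}{T\log T})\big)=1+O(\log\log\log N/\log\log N)$.
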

 \begin{proof}
  From \eqref{SNsmfn}, it suffices to analyse the two products. For the first, we have
  \begin{align*}
   \prod_{3\leq p\leq T} \bigg(1-\frac{1}{(p-1)^2}\bigg) &= \exp\bigg(-\sum_{3\leq p\leq T} \bigg(\frac{1}{(p-1)^2} + O\bigg(\frac{1}{p^3}\bigg)\bigg) \bigg) \\
   &= \exp\bigg(-\sum_{p\geq 3} \bigg(\frac{1}{(p-1)^2} + O\bigg(\frac{1}{p^3}\bigg)\bigg) + O\bigg(\frac{1}{T}\bigg)\bigg) \\
   &= C_2 + O\bigg(\frac{1}{T}\bigg).
  \end{align*}
  For the second product, note that there are at most $O(\frac{\log N}{\log\log N})$ primes $p\mid N$ such that $p\geq T$. Using this, we get 
  \begin{align*}
   \prod_{\substack{3\leq p \leq T \\ p\mid N}} \bigg(\frac{p-1}{p-2}\bigg) &= \prod_{\substack{p\geq 3 \\ p\mid N}} \bigg(\frac{p-1}{p-2}\bigg) \prod_{\substack{p \geq T \\ p\mid N}} \bigg(1 + \frac{1}{p-2}\bigg)^{-1} \\
   &= \prod_{\substack{p\geq 3 \\ p\mid N}} \bigg(\frac{p-1}{p-2}\bigg) \exp\Bigg(\sum_{\substack{p \geq T \\ p\mid N}} \bigg(\frac{1}{p-2} + O\bigg(\frac{1}{p^2}\bigg)\bigg)\Bigg) \\
   &= \prod_{\substack{p\geq 3 \\ p\mid N}} \bigg(\frac{p-1}{p-2}\bigg) \exp\Bigg(O\bigg(\frac{1}{T}\frac{\log N}{\log\log N}\bigg)\Bigg) \\
   &= \prod_{\substack{p\geq 3 \\ p\mid N}} \bigg(\frac{p-1}{p-2}\bigg) \bigg(1+O\bigg(\frac{\log\log\log N}{\log\log N}\bigg)\bigg),
  \end{align*}
  concluding the proof.
 \end{proof}

 Thus, it follows from Lemma \ref{goldSN} and \eqref{eqgold} that
 \begin{align}
  \E&(r_{\A,2}(N)) \nonumber \\
  &=  2C_2 \prod_{\substack{p\geq 3 \\ p\mid N}} \bigg(\frac{p-1}{p-2}\bigg)\bigg(1+O\bigg(\frac{\log\log\log N}{\log\log N}\bigg)\bigg) \int_{\sqrt{N}}^{N} \frac{\mathrm{d}t}{\log(t)\log(N-t)} + O(\sqrt{N}) \nonumber \\ 
  &= 2C_2 \prod_{\substack{p\geq 3 \\ p\mid N}} \bigg(\frac{p-1}{p-2}\bigg) \frac{N}{(\log N)^2} \bigg(1+O\bigg(\frac{\log\log\log N}{\log\log N}\bigg)\bigg). \label{expec2}
 \end{align}
 We can apply Kim--Vu's inequality (Lemma \ref{kimvu}) taking $\lambda = 2\log N$, which, since $E' \leq 2$, yields
 \begin{align*}
  \Pr(|r_{\A,2}(N) - \E(r_{\A,2}(N))| \geq 128\,\lambda^2\, \E(r_{\A,2}(N))^{1/2}) \ll N^{-2}.
 \end{align*}
 It follows from \eqref{expec2} that $\lambda^2\, \E(r_{\A,2}(N))^{1/2} = O(\sqrt{N}\log N)$. Thus, applying the Borel--Cantelli lemma, we conclude that
 \begin{align*}
  \#\{n\leq N ~|~ N-n\in \A\} &\stackrel{\textnormal{a.s.}}{=} \E(\#\{n\leq N ~|~N-n\in \A\}) + O(\sqrt{N}\log N).
 \end{align*}
 Therefore, there is a set $A\subseteq \Z_{\geq 2}$ satisfying Theorem \ref{MT1} (i) and
 \begin{equation*}
  \#\{n\leq N ~|~ N-n\in A\} = 2C_2\prod_{\substack{p\geq 3 \\ p\mid N}} \bigg(\frac{p-1}{p-2}\bigg) \frac{N}{(\log N)^2} \bigg(1+O\bigg(\frac{\log\log\log N}{\log\log N}\bigg)\bigg). 
 \end{equation*}

\subsection{How many primes are in \texorpdfstring{$A$}{A}?}
 Since, by Lemma \ref{mert} (i),
 \begin{align}
  \E(\#\{p\leq x ~|~ p\in \A\}) &= \sum_{\sqrt{x}< n\leq x} \Pr(p\in \A) + O(\sqrt{x}) \nonumber \\
  &= \big(e^{\gamma} \log T + O(1)\big) \int_{\sqrt{x}}^{x} \frac{\mathrm{d}\pi(t)}{\log t} + O(\sqrt{x}) \nonumber \\
  &= e^{\gamma}\frac{x}{(\log x)^2}(\log T + O(1)),\nonumber
 \end{align}
 and the strong law of large numbers\footnote{Alternatively, one can use Kim-Vu's inequality together with the Borel--Cantelli lemma.} implies $\#\{p\leq x ~|~ p\in \A\} \stackrel{\text{a.s.}}{\sim} \E(\#\{p\leq x ~|~ p\in \A\})$, we can choose $A$ satisfying
 \[ \#\{p\leq x ~|~ p\in A\} \gtrsim e^{\gamma}\frac{x}{(\log x)^2}\log\log x. \]

\addtocontents{toc}{\protect\setcounter{tocdepth}{0}}
\section*{Acknowledgements}
 I thank the referee for helpful comments and careful reading.
 
\addtocontents{toc}{\protect\setcounter{tocdepth}{1}}

{
\bibliographystyle{amsplain}
\bibliography{$HOME/Academie/Recherche/_latex/bibliotheca}%
}
\end{document}